\newcommand{\PP}{\mathbb P}
\newcommand{\NN}{\mathbb N}
\newcommand{\QQ}{\mathbb Q}
\newcommand{\ZZ}{\mathbb Z}
\newcommand{\FF}{\mathbb F}
\newcommand{\diag}{\mathrm{ diag}}
\newtheorem*{theoremA}{Theorem A}
\newtheorem{theorem}{Theorem}
\newtheorem{coro}{Corollary}
\newtheorem{proposition}{Proposition}
\newtheorem{remark}{Remark}
\newtheorem{lemma}{Lemma}
\begin{document}

\title{A note on matrices over $\ZZ$ with entries stemming from binomial coefficients and from Catalan numbers once pure and once taken modulo $2$}
\author{Roswitha Hofer\thanks{Institute of Financial Mathematics and Applied Number Theory, Johannes Kepler University Linz, Altenbergerstr. 69, 4040 Linz, Austria. e-mail: roswitha.hofer@jku.at}}

\maketitle
\begin{abstract}
The Pascal matrix, which is related to Pascal's triangle, appears in many places in the theory of uniform distribution and in many other areas of mathematics. Examples are the construction of low-discrepancy sequences as well as normal numbers or the binomial transforms of Hankel matrices. Hankel matrices which are defined by Catalan numbers and related to the paperfolding sequence are interesting objects in number theory. Therefore, matrices that share many properties with the Pascal matrix or such Hankel matrices are of interest. In this note we will collect common features of the Pascal matrix and the same modulo $2$ as well as the Hankel matrix defined by Catalan numbers once pure and once modulo $2$ in the ring of integers. Hankel matrices with only $0$ and $1$ entries in e.g. finite fields gave recently access to counterexamples to the so-called $X$-adic Liouville conjecture. This justifies as well as motivates our consideration of further matrices with $0$ and $1$ entries. 
\end{abstract}

\noindent{\textbf{Keywords:} Pascal matrices, Hankel matrices,  Catalan numbers, paperfolding sequences} 
\\
\noindent{\textbf{MSC2010:} 11C20, 11B50.}

\section{Introduction}
In this note we collect and work out nice coincidences between the pure Pascal matrix---that is built by the binomial coefficients---and between this Pascal matrix taken modulo $2$, both considered as matrices in the ring of integers. Further interesting coincidences are detected between different Hankel matrices determined by the Catalan numbers once pure and once modulo $2$. We will organize our note as follows. 
Section~\ref{sec:2} is devoted to the Pascal matrix which is used for constructing digital sequences in the sense of Niederreiter and for constructing normal numbers both satisfying low discrepancy bounds. The noted coincidences with the Pascal matrix modulo $2$, therefore makes this matrix to a potential candidate for constructing further low-discrepancy sequences and further interesting normal numbers. 
Section~\ref{sec:3} is centred on Hankel matrices determined by the Catalan numbers once pure and once modulo $2$. Such Hankel matrices are related to low-discrepancy Kronecker-type sequences. 
Matrices and sequences with $0$-$1$ entries exclusively but considered in a field or ring with more than two elements, might be interesting in various branches in mathematics. One recent example is the Hankel matrix determined via a paperfolding $0$-$1$ sequence $(f_k)_{k\geq 1}$ considered as elements in the finite field with $3$ elements. The related formal Laurent series $\sum_{k\geq 0} f_kX^{-k}$ over $\FF_3$ has been identified as a counterexample to the $X$-adic Littlewood conjecture by Adiceam, Nesharim, and Lunnon \cite{ANL} (see also \cite{GR} fur further results). 
A concluding Section~\ref{sec:4} discusses the role of the matrices in this note for different areas of mathematics. 

We will make use of the following notation. For a matrix $C=(c_{i,j})_{i,j\geq 0}$ over any set we denote by $C^{(n,m,k)}$ the $n\times m$ matrix $(c_{i,j})_{0\leq i\leq n, k\leq j\leq k+m}$, in words the upper $n\times m$ submatrix of $C$ starting with column $k$. If $m=n$ we will write $C^{(n,k)}$ instead of $C^{(n,n,k)}$ for the sake of simplicity. If, furthermore, $k=0$ we just write $C^{(n)}$ for $C^{(n,n,0)}$. 

\section{Nice coincidences for the Pascal matrix}\label{sec:2}


There are two ways of defining the so-called Pascal matrix related to Pascal's triangle. The first gives an upper triangular matrix, i.e.,
$$P_1:=\Big(\binom{j}{i}\Big)_{i,j\geq 0},$$ 
where $\binom{j}{i}$ is considered to be zero if $i>j$. 
The second a symmetric matrix, i.e. 
$$P_2:=\Big(\binom{j+i}{i}\Big)_{i,j\geq 0}.$$
As well as the binomial coefficient, the Pascal matrices are well studied objects. 
In the following Theorem A we list some nice and well-known properties of these two versions of the Pascal matrix and add some references for the interested reader. 
\begin{theoremA}
\begin{enumerate}
	\item We have $P_1^TP_1=P_2$.
	(See e.g. \cite[Equ.~(6)]{Lunnon}). 
	\item Let $a\in\ZZ\setminus\{0\}$. Then $P_1^{a}=(\binom{j}{i}a^{j-i})_{i,j\geq 0}=:P_1(a)$. (Cf. e.g. \cite[Proposition~2]{hoflarAA}).
	\item We set $P_1(0)=(\delta_{i,j})_{i,j\geq 0}$, where $\delta_{i,i}=1$ and $\delta_{i,j}=0$ whenever $i\neq j$. Then $P_1(a)P_1(b)=P_1(a+b)$ for all $a,b\in\ZZ$. (This is an easy consequence of Item 2.)
	\item The matrices $P_1(0),\,P_1(1),\,\ldots,\,P_1(p-1)$ all taken $\pmod{p}$ with $p\in\PP$ are qualified to construct a so-called $(0,p)$-sequence in the sense of Niederreiter. These sequences are well-established as Faure sequences in the literature.  (Cf. e.g. \cite[Example~2]{hoflarAA} and the references therein.) 
	\item The matrix $P_2\mod{p}$ with $p\in\PP$ is qualified to construct a normal number in base $p$ with best known discrepancy behavior. (See \cite{hoflarNN1,hoflarNN2,levin99}). 
	\item Every upper left $n\times n$ submatrix of $P_2$ has determinant $1$, i.e. $\det(P_2^{(n)})=1$ for every $n\in\NN$. (See \cite[Theorem 1.2]{bickhogg}.)
	\item Every upper $n\times n$ submatrix of $P_1$ has determinant $1$, i.e., $\det(P_1^{(n,k)})=1$ for every $n\in\NN$ and every $k\in\NN_0$. (See \cite[Theorem~1.1]{bickhogg}.)
	\item Every upper $n\times n$ submatrix of $P_2$ has determinant $1$, i.e., $\det(P_2^{(n,k)})=1$ for every $n\in\NN$ and every $k\in\NN_0$. (This is an easy consequence of Items 7 and 1.)
	
\end{enumerate}
\end{theoremA}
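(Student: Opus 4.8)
The plan is to push the factorization $P_1^T P_1 = P_2$ of Item~1 down to the level of the finite submatrices that appear in the statement, and then to read off the determinant from multiplicativity of $\det$ together with Item~7. Concretely, I would show that the block $P_2^{(n,k)}$ itself factors as a product of two $n\times n$ blocks of $P_1$ whose determinants are already known.

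First I would record what $P_2=P_1^T P_1$ says entrywise on the block $P_2^{(n,k)}$, whose $(i,j')$-entry is $(P_2)_{i,\,k+j'}=\sum_{\ell\ge 0}(P_1)_{\ell,i}\,(P_1)_{\ell,\,k+j'}$ for $0\le i,j'\le n-1$. The crucial observation is that $P_1$ is upper triangular, so $(P_1)_{\ell,i}=\binom{i}{\ell}$ vanishes as soon as $\ell>i$; since here $i\le n-1$, every summand with $\ell\ge n$ drops out and the (a priori infinite) sum truncates to $\sum_{\ell=0}^{n-1}$. This truncation is exactly what lets the block factor through finite matrices: writing $U:=P_1^{(n)}$ and $V:=P_1^{(n,k)}$, so that $U_{\ell,i}=(P_1)_{\ell,i}$ and $V_{\ell,j'}=(P_1)_{\ell,\,k+j'}$, one gets $(P_2^{(n,k)})_{i,j'}=\sum_{\ell=0}^{n-1}U_{\ell,i}V_{\ell,j'}=(U^T V)_{i,j'}$, whence $P_2^{(n,k)}=\big(P_1^{(n)}\big)^T\,P_1^{(n,k)}$.

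With the factorization in hand the conclusion is immediate. By multiplicativity of the determinant and invariance under transposition, $\det\big(P_2^{(n,k)}\big)=\det\big(P_1^{(n)}\big)\cdot\det\big(P_1^{(n,k)}\big)$; and since $P_1^{(n)}=P_1^{(n,0)}$ corresponds to the admissible value $k=0\in\NN_0$, Item~7 forces both factors to equal $1$, so that $\det\big(P_2^{(n,k)}\big)=1$ for all $n\in\NN$ and $k\in\NN_0$.

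The only point requiring care — and thus the main, though mild, obstacle — is the truncation step: one must check that no row of index $\ge n$ of $P_1$ contributes to the block $P_2^{(n,k)}$, so that the product genuinely factors through the two named $n\times n$ submatrices rather than through larger ones. This is guaranteed precisely by the upper-triangular shape of $P_1$, and once it is verified the remainder is bookkeeping.
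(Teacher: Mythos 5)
Your proposal is correct and follows exactly the route the paper indicates for Item~8, which is stated there as ``an easy consequence of Items 7 and 1'': you use the factorization $P_1^TP_1=P_2$ together with the upper-triangularity of $P_1$ to truncate the product to the finite block identity $P_2^{(n,k)}=\big(P_1^{(n)}\big)^T P_1^{(n,k)}$, and then conclude by multiplicativity of the determinant and Item~7. The truncation step you flag is precisely the bookkeeping the paper leaves implicit, and your verification of it is sound.
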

\begin{remark}{\rm
The statement in Item 4 above uses the notion of $(t,s)$-sequences in the sense of Niederreiter. A condition for $s$, $\NN_0\times\NN_0$-matrices $C_1,C_2,\ldots,C_s$ over $\FF_p$, with $p\in\PP$ such that they are qualified to construct a low-discrepancy $(t,s)$-sequence with fixed $t\in\NN_0$ can be formulated as follows. For every integer $m> t$ and all choices of $d_1,d_2,\ldots,d_s\in\NN_0$ such that $d_1+d_2+\cdots+d_s=m-t$ the $(m-t)\times m$ matrix formed by stacking $C_1^{(d_1,m,0)} $, $C_2^{(d_2,m,0)}$, ..., $C_s^{(d_s,m,0)}$ has full row rank $m-t$. (The interested reader is referred to e.g. \cite{DP} for the construction algorithm of the $s$-dimensional sequence based on $C_1,C_2,\ldots,C_s$ over $\FF_p$---the so-called digital method---and for the definition and properties of the \emph{discrepancy} of a sequence.)}
\end{remark}

In the following we define the Pascal matrices over $\ZZ$ with $0$-$1$ entries, which we abbreviate to $M_1$ and $M_2$, by taking the entries of $P_1$ and $P_2$ modulo $2$, i.e. 
$$M_1=\Big(\binom{j}{i}\pmod{2}\Big)_{i,j\geq 0}\quad\mbox{ and }\quad M_2=\Big(\binom{j+i}{i}\pmod{2}\Big)_{i,j\geq 0}.$$
Furthermore, we set for $a\in\ZZ\setminus\{0\}$ 
$$M_1(a):=\Big(\big(\binom{j}{i}\pmod{2}\big) a^{s_2(j)-s_2(i)}\Big)_{i,j\geq 0},$$
where $s_2(n)$ denotes the binary sum-of-digits, i.e. $s_2(n)=n_0+n_1+n_2+\cdots$ where $n=n_0+n_12+n_22^2+\cdots$ is the unique binary representation of $n$ with $n_i\in\{0,1\}$ for all $i\geq 0$. For $a=0$, we set $M(0)=(\delta_{i,j})_{i,j\geq 0}$. \\

Some of the items in Theorem A were already (partially) re-identified for $M_1$ and $M_2$ resp.

	\begin{enumerate}
		\item[(I)] Every upper left $n\times n$ submatrix of $M_2\in\ZZ^{\NN_0\times\NN_0}$ has determinant $ \pm 1$. Indeed we have $\det(M_2^{(n)})=\prod_{k=0}^{n-1}(-1)^{s_2(k)}$ for every $n\in\NN$. (See \cite[Theorem 1.1 (i)]{BaCh}.) This is an analogue to Item 6.
	\item[(II)] Every upper $n\times n$ submatrix of $M_1$ has determinant $ \pm 1$, i.e. $\det(M_1^{(n,k)})=\pm 1$ for every $n\in\NN$ and every $k\in\NN_0$. (See \cite[Theorem 1]{mereb}.) This is an analogue of Item~7 in Theorem~A. 
	\item[(III)] The matrix $M_2$ in $\ZZ^{\NN_0\times\NN_0}$ is qualified to construct a normal number in any base $b\in\NN\setminus\{1\}$ with good discrepancy behavior. (See \cite{levin99}, in which Item (II) was claimed but not proven.) This is an analogue of Item 5 in Theorem A.
	\end{enumerate}
	
In the following Theorem \ref{thm:1} and its corollaries we show some more coincidences between $M_1,\,M_2$ and $P_1,\,P_2$ as well as $M_1(a)$ and $P_1(a)$. \\
	
	We start with the main Theorem \ref{thm:1} that represents an analogue of Item 2 as well as Item 3 in Theorem~A. 
	
	\begin{theorem}\label{thm:1}
	We have $M_1(a)M_1(b)=M_1(a+b)$ for all $a,b\in\ZZ$. 
	\end{theorem}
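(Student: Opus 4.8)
The plan is to compute the $(i,j)$ entry of $M_1(a)M_1(b)$ directly and to show it equals $\overline{\binom{j}{i}}\,(a+b)^{s_2(j)-s_2(i)}$, where I abbreviate $\overline{\binom{j}{i}}:=\binom{j}{i}\pmod 2\in\{0,1\}$; this mirrors the classical proof of Items~2 and~3 of Theorem~A but with the exponent $j-i$ replaced by $s_2(j)-s_2(i)$ and every binomial read modulo~$2$. By Lucas' theorem, $\overline{\binom{j}{i}}=1$ holds exactly when the binary digits of $i$ form a subset of those of $j$, a relation I write $i\preceq j$. Since $\binom{k}{i}=0$ unless $i\le k$ and $\binom{j}{k}=0$ unless $k\le j$, only finitely many $k$ contribute to the matrix product; a nonzero summand moreover forces $i\preceq k\preceq j$, and for such $k$ all exponents appearing below are nonnegative. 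Hence
$$(M_1(a)M_1(b))_{i,j}=\sum_{k\geq 0}\overline{\binom{k}{i}}\,\overline{\binom{j}{k}}\,a^{s_2(k)-s_2(i)}b^{s_2(j)-s_2(k)}.$$

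Two identities drive the computation. First, the subset-of-subset identity $\binom{k}{i}\binom{j}{k}=\binom{j}{i}\binom{j-i}{k-i}$ holds over $\ZZ$ and hence survives reduction modulo~$2$, giving $\overline{\binom{k}{i}}\,\overline{\binom{j}{k}}=\overline{\binom{j}{i}}\,\overline{\binom{j-i}{k-i}}$. Second, whenever $i\preceq j$ the set bits of $j-i$ are exactly those of $j$ not already present in $i$, so $s_2(j)=s_2(i)+s_2(j-i)$; in particular $s_2(j)-s_2(i)=s_2(j-i)$ precisely when $\overline{\binom{j}{i}}=1$.

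I would then split into two cases. If $i\not\preceq j$, then $\overline{\binom{j}{i}}=0$, so by the reduced subset-of-subset identity every summand vanishes and the entry equals $0=(M_1(a+b))_{i,j}$. If $i\preceq j$, the surviving indices are those $k$ with $i\preceq k\preceq j$; writing $S$, $I$, $K$ for the sets of set bits of $j$, $i$, $k$ respectively (so $I\subseteq K\subseteq S$) and putting $K':=K\setminus I$ and $T:=S\setminus I$, the digit-sum additivity gives $s_2(k)-s_2(i)=|K'|$ and $s_2(j)-s_2(k)=|T|-|K'|$, while $\overline{\binom{j-i}{k-i}}=1$ holds automatically. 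As $k$ ranges over the admissible values, $K'$ ranges over all subsets of $T$, so the entry collapses to
$$\sum_{K'\subseteq T}a^{|K'|}b^{|T|-|K'|}=\sum_{m=0}^{|T|}\binom{|T|}{m}a^{m}b^{|T|-m}=(a+b)^{|T|}=(a+b)^{s_2(j-i)},$$
which, by the additivity $s_2(j-i)=s_2(j)-s_2(i)$, is exactly $(M_1(a+b))_{i,j}$.

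It remains to treat $a=0$ or $b=0$: with the convention $0^0=1$ the entry formula for $M_1(a)$ at $a=0$ returns $1$ when $i=j$ and $0$ otherwise, recovering $M_1(0)=(\delta_{i,j})$, so these cases follow either from the same computation or, more directly, from the fact that $M_1(0)$ is the identity. I expect the only real obstacle to be bookkeeping: turning the two modular constraints $i\preceq k$ and $k\preceq j$ into the clean bijection $k\leftrightarrow K'\subseteq T$ and checking that the digit-sum exponents align so that the collapsed sum is a genuine, unweighted binomial expansion. Once the additivity of $s_2$ under $\preceq$ is in place, the binomial theorem closes the argument.
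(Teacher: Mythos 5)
Your proof is correct, and it reaches the identity by a genuinely different route than the paper's. The paper assumes $a,b\neq 0$ at the outset, uses Lucas' theorem to factor the entire sum $\sum_{l}[M_1(a)]_{i,l}[M_1(b)]_{l,j}$ into an infinite product over binary digit positions, and then checks by a four-case analysis on each digit pair $(i_k,j_k)$ that every factor equals $\binom{j_k}{i_k}(a+b)^{j_k-i_k}$; those manipulations involve negative powers such as $a^{-s_2(i)}$ and $b^{-l_k}$, which is precisely why the nonzero assumption is needed there. You instead keep the sum global: Lucas' theorem restricts the support to indices $k$ with $i\preceq k\preceq j$ (note that transitivity of $\preceq$ alone already kills the case $i\not\preceq j$, so your trinomial-revision identity is a convenience rather than a necessity), the additivity $s_2(j)=s_2(i)+s_2(j-i)$ along $\preceq$ lets you reparametrize the admissible $k$ by subsets $K'\subseteq T$, and the binomial theorem collapses $\sum_{K'\subseteq T}a^{|K'|}b^{|T|-|K'|}$ to $(a+b)^{|T|}$. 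What your route buys: all exponents stay nonnegative, so the whole computation is a polynomial identity in $a,b$ valid over $\ZZ$ without excluding any values; in particular the degenerate case $a+b=0$ with $a,b\neq 0$ falls out of the same formula under the convention $0^0=1$, matching the separately defined $M_1(0)=(\delta_{i,j})_{i,j\geq 0}$, whereas the paper's final rewriting needs the same convention implicitly. What the paper's route buys: the digit-by-digit factorization mirrors the product structure of Lucas' theorem directly, is shorter once that factorization is accepted, and is the form that would adapt most naturally to digit-wise arguments modulo a general prime. Both arguments are complete; yours is arguably the more elementary bookkeeping, and it makes the underlying combinatorics (a Vandermonde-type convolution over subsets of binary digits) explicit.
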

	
	\begin{proof}We may assume that both $a$ and $b$ are nonzero. Note that the case where $a=0$ or $b=0$ is obvious. 
	We heavily use Lucas' Theorem modulo $2$, which states 
	\begin{equation}\label{eq:LT}
	\binom{j}{i}\pmod{2}=\prod_{k=0}^\infty\binom{j_k}{i_k},
	\end{equation}
	where $i=i_0+i_12+i_22^2+\cdots$ and $j=j_0+j_12+j_22^2+\cdots$ are the binary representations of $i$ and $j$. 
	Obviously, $\binom{j}{i}\mod{2}=1$, if $j_k\geq i_k$ for all $k\geq 0$ and $\binom{j}{i}\mod{2}=0$, else. 
	Here and in the following $[C]_{i,j}$ denotes the coefficient of $C\in\ZZ^{\NN_0\times \NN_0}$ in the $i$th row and $j$th column. 
	
	We compute $[M_1(a)\cdot M_1(b)]_{i,j}$ by using the binary representation of $l=l_0+l_12+l_22^2+\cdots$:
	\begin{align*}
	[M_1(a)\cdot M_1(b)]_{i,j}&=\sum_{l=0}^\infty [M_1(a)]_{i,l}\cdot [M_1(b)]_{l,j}\\
	&=\sum_{l=0}^\infty\Big(\binom{l}{i}\pmod{2}\Big) a^{s_2(l)-s_2(i)}\Big(\binom{j}{l}\pmod{2}\Big) b^{s_2(j)-s_2(l)}\\
	&=a^{-s_2(i)}b^{s_2(j)}\prod_{k=0}^\infty\sum_{l_k=0}^1\binom{j_k}{l_k}\binom{l_k}{i_k}a^{l_k}b^{-l_k}\\
	&=\prod_{k=0}^\infty a^{-i_k}b^{j_k}\sum_{l_k=i_k}^{j_k}{\binom{j_k}{l_k}\binom{l_k}{i_k}a^{l_k}b^{-l_k}}.
	\end{align*}
We observe 
	$$a^{-i_k}b^{j_k}\sum_{l_k=i_k}^{j_k}{\binom{j_k}{l_k}\binom{l_k}{i_k}a^{l_k}b^{-l_k}}=\left\{\begin{array}{ll}0=\binom{j_k}{i_k}(a+b)^{j_k-i_k}& \mbox{if }1=i_k>j_k=0\\ 
	a^{-i_k}b^{j_k}=\binom{j_k}{i_k}(a+b)^{j_k-i_k}& \mbox{if }i_k=j_k=0\\
	a^{-i_k}b^{j_k}\frac{a+b}{b}=\binom{j_k}{i_k}(a+b)^{j_k-i_k}& \mbox{if }0=i_k<j_k=1\\
		a^{-i_k}b^{j_k}\frac{a}{b}=\binom{j_k}{i_k}(a+b)^{j_k-i_k}& \mbox{if }i_k=j_k=1.
	\end{array}\right.$$
	Finally, we arrive at the desired equality,
	\begin{align*}
	[M_1(a)\cdot M_1(b)]_{i,j}&=\prod_{k=0}^\infty a^{-i_k}b^{j_k}\sum_{l_k=i_k}^{j_k}{\binom{j_k}{l_k}\binom{l_k}{i_k}a^{l_k}b^{-l_k}}\\
	&=\prod_{k=0}^\infty\binom{j_k}{i_k}(a+b)^{j_k-i_k}=(\binom{j}{i}\pmod{2})(a+b)^{s_2(j)-s_2(i)}=[M_1(a+b)]_{i,j}.
	\end{align*}
	\end{proof}

	The following Lemma \ref{cor:1} reproves the $LDU$ factorisation of $M_2\in\ZZ^{\NN_0\times \NN_0}$, which was also used in \cite{mereb}. This Lemma \ref{cor:1} is an analogue to Item 1 in Theorem A. 
\begin{lemma}\label{cor:1}
We have $M_1^T\diag\big(((-1)^{t_i})_{i\geq 0}\big) M_1=M_2$ where $t_i:=s_2(i)\pmod{2}$ is the $i$-th element of the Thue--Morse sequence $(t_i)_{i\geq 0}$. 
\end{lemma}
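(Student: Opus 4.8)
The plan is to verify the identity entry by entry. Writing $D:=\diag\big(((-1)^{t_i})_{i\ge 0}\big)$ and using $[M_1^T]_{i,l}=[M_1]_{l,i}=\binom{i}{l}\pmod 2$ together with the fact that $D$ is diagonal, the $(i,j)$-entry of the left-hand side collapses to a single sum over the shared index $l$,
\[
[M_1^T D M_1]_{i,j}=\sum_{l=0}^\infty\Big(\binom{i}{l}\pmod 2\Big)(-1)^{s_2(l)}\Big(\binom{j}{l}\pmod 2\Big),
\]
where I have used $t_l\equiv s_2(l)\pmod 2$, so that $(-1)^{t_l}=(-1)^{s_2(l)}$. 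The goal is to show this equals $\binom{i+j}{i}\pmod 2=[M_2]_{i,j}$.

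Next I would apply Lucas' Theorem \eqref{eq:LT} to both binomial coefficients modulo $2$, exactly as in the proof of Theorem~\ref{thm:1}: writing $l=l_0+l_12+l_22^2+\cdots$ in binary and using $(-1)^{s_2(l)}=\prod_k(-1)^{l_k}$, the summand factors across digit positions and the whole sum becomes a finite product of one-digit sums,
\[
[M_1^T D M_1]_{i,j}=\prod_{k=0}^\infty\ \sum_{l_k=0}^1\binom{i_k}{l_k}\binom{j_k}{l_k}(-1)^{l_k}.
\]
Only finitely many factors differ from $1$, so the product is well defined. A one-line case check over $(i_k,j_k)\in\{0,1\}^2$ then evaluates the inner sum $S_k$: it equals $1$ in the three cases in which the two digits are not both equal to $1$, and it equals $1-1=0$ when $i_k=j_k=1$. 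Equivalently, $S_k\equiv\binom{i_k+j_k}{i_k}\pmod 2$.

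Finally I would identify the product with the target entry. The product $\prod_k S_k$ vanishes precisely when some digit satisfies $i_k=j_k=1$, and equals $1$ otherwise. This is exactly the criterion for the parity of $\binom{i+j}{i}$: by Kummer's theorem the $2$-adic valuation of $\binom{i+j}{i}$ counts the carries occurring in the binary addition of $i$ and $j$, and such a carry occurs if and only if $i_k=j_k=1$ for some $k$. Hence $\prod_k S_k=\binom{i+j}{i}\pmod 2=[M_2]_{i,j}$, which is the assertion.

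The main obstacle is this last identification. The factorization over binary digits produces $\prod_k\binom{i_k+j_k}{i_k}\pmod 2$, but one cannot match this to $\binom{i+j}{i}\pmod 2$ by a naive digit-wise Lucas expansion, because the digits of $i+j$ are contaminated by the carries. The clean bridge is Kummer's theorem, or, equivalently, the observation that $\binom{i+j}{i}$ is odd if and only if $i$ and $j$ have disjoint binary supports; this is the single place where a number-theoretic input beyond the formal computation is genuinely required.
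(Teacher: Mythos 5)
Your proposal is correct and follows essentially the same route as the paper: expand the $(i,j)$ entry as a sum over $l$, factor digit-wise via Lucas' theorem, evaluate the one-digit sums to $1-i_kj_k\equiv\binom{i_k+j_k}{i_k}\pmod 2$, and conclude via the carry criterion (which is exactly Kummer's theorem at $p=2$, stated in the paper as ``$\binom{i+j}{i}$ is odd iff no carry occurs in the binary addition of $i$ and $j$''). The only cosmetic difference is that you cite Kummer's theorem by name where the paper invokes the carry fact directly.
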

\begin{proof}
We compute by applying Lucas' Theorem modulo $2$ (see equation \eqref{eq:LT}) and by using the binary representation of $l=l_0+l_12+l_22^2+\cdots$:
\begin{align*}
[M_1^T\diag\big((-1)^{t_i})_{i\geq 0}\big) M_1]_{i,j}&=\sum_{l\geq 0}\Big(\binom{i}{l}\pmod{2}\Big)(-1)^{s_2(l)}\Big(\binom{j}{l}\pmod{2}\Big)\\
&=\prod_{k\geq 0}\sum_{l_k=0}^1\binom{i_k}{l_k}(-1)^{l_k} \binom{j_k}{l_k}\\
&=\prod_{k\geq 0}\Big(1+(-1)\binom{i_k}{1}\binom{j_k}{1}\Big)\\
&=\prod_{k\geq 0}\big(\binom{i_k+j_k}{i_k}\pmod{2}\big)\\
&=\binom{i+j}{i}\pmod 2,
\end{align*}
where in the last step we used that $\binom{i+j}{i}\pmod 2=0$ if there occurs at least one carry when adding the binary expansions $i=i_0+i_12+i_22^2+\cdots$ and $j=j_0+j_12+j_22^2+\cdots$, and $\binom{i+j}{i}\pmod 2=1$ else. 

\end{proof}

The following Corollary \ref{cor:2} reproves \cite[Theorem~1.1]{BaCh} that serves as an analogue Item 6 of Theorem A. 
\begin{coro}\label{cor:2} Let $n\in\NN$. Every upper left $n\times n$ submatrix of $M_2$ has determinant $\prod_{i=0}^{n-1}(-1)^{s_2(i)}$, i.e., $\det(M_2^{(n)})=\prod_{i=0}^{n-1}(-1)^{s_2(i)}$ for every $n\in\NN$.\end{coro}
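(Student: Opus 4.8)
The plan is to exploit the factorisation established in Lemma~\ref{cor:1}, namely $M_2=M_1^T D M_1$ with $D=\diag\big(((-1)^{t_i})_{i\geq 0}\big)$, together with the triangular shape of $M_1$. First I would record that $M_1$ is \emph{upper unitriangular}: since $\binom{j}{i}=0$ whenever $i>j$, all entries of $M_1$ below the diagonal vanish, while $[M_1]_{i,i}=\binom{i}{i}\pmod 2=1$, so every diagonal entry equals $1$. Consequently $M_1^T$ is lower unitriangular and $D$ is diagonal, so the identity of Lemma~\ref{cor:1} is an $LDU$-type factorisation of $M_2$.

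The key step is to pass to the leading $n\times n$ block. Because $M_1^T$ is lower triangular, $D$ is diagonal, and $M_1$ is upper triangular, the leading $n\times n$ principal submatrix of the product equals the product of the leading $n\times n$ blocks; that is,
$$M_2^{(n)}=\big(M_1^{(n)}\big)^T D^{(n)} M_1^{(n)},$$
where $D^{(n)}=\diag\big((-1)^{t_0},\ldots,(-1)^{t_{n-1}}\big)$. I would justify this directly at the level of entries: for $i,j<n$ the entry $[M_2]_{i,j}=\sum_{l\geq 0}[M_1]_{l,i}(-1)^{t_l}[M_1]_{l,j}$ only receives contributions from indices $l\leq\min(i,j)<n$, since $[M_1]_{l,i}=0$ as soon as $l>i$ by upper triangularity. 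Thus the infinite sum collapses to a sum over $l<n$, which is exactly the $(i,j)$-entry of $\big(M_1^{(n)}\big)^T D^{(n)} M_1^{(n)}$.

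From here the conclusion is immediate upon taking determinants. One obtains
$$\det\big(M_2^{(n)}\big)=\det\big(M_1^{(n)}\big)^2\cdot\det\big(D^{(n)}\big),$$
and since $M_1^{(n)}$ is unitriangular we have $\det\big(M_1^{(n)}\big)=1$, whence
$$\det\big(M_2^{(n)}\big)=\det\big(D^{(n)}\big)=\prod_{i=0}^{n-1}(-1)^{t_i}=\prod_{i=0}^{n-1}(-1)^{s_2(i)},$$
where the final equality uses $(-1)^{t_i}=(-1)^{s_2(i)}$ because $t_i=s_2(i)\pmod 2$. The only point requiring genuine care—and hence the main obstacle—is the leading-block factorisation displayed above: it is elementary, but it relies essentially on the triangularity of $M_1$ to truncate the otherwise infinite matrix product, and I would make sure to indicate explicitly where that hypothesis enters.
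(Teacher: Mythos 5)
Your proposal is correct and follows essentially the same route as the paper: apply the factorisation $M_1^T\diag\big(((-1)^{t_i})_{i\geq 0}\big)M_1=M_2$ from Lemma~\ref{cor:1}, truncate to the leading $n\times n$ blocks (valid by the triangularity of $M_1$), and take determinants using $\det(M_1^{(n)})=1$. The only difference is that you spell out the entry-level justification for the truncation step, which the paper's proof takes for granted.
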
 

\begin{proof}
From Lemma \ref{cor:1} we know $(M_1^{(n)})^T\diag((-1)^{t_i})_{0\leq i<n} M_1^{(n)}=M_2^{(n)}$. Since $M_1$ is a non-singular upper triangular matrix with diagonal entries all $1$ we immediately obtain $\det(M_2^{(n)})=\det(\diag((-1)^{t_i})_{0\leq i<n})=\prod_{i=0}^{n-1}(-1)^{s_2(i)}$. 
\end{proof}

The next Corollary \ref{cor:3} even generalizes \cite[Theorem~1]{mereb} the analogue of Item 7 in Theorem A . 
\begin{coro}\label{cor:3}Let $n\in\NN$, $k\in\NN_0$, and $a\neq 0$. The upper $n\times n$ submatrix of $M_1(a)$ starting with column $k$ has determinant $ \pm \prod_{i=0}^{n-1}a^{s_2(i+k)-s_2(i)}$, i.e., $\det(M_1^{(n,k)}(a))=\pm \prod_{i=0}^{n-1}a^{s_2(i+k)-s_2(i)}$ for every $n\in\NN$ and every $k\in\NN_0$. 
\end{coro}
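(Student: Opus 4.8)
The plan is to reduce the computation to the already-established value $\det(M_1^{(n,k)})=\pm1$ (Item (II), i.e.\ \cite[Theorem~1]{mereb}) by peeling off all the powers of $a$ via the multilinearity of the determinant. Writing out a generic entry, the $(i,j)$-entry of $M_1^{(n,k)}(a)$ (with $0\le i,j\le n-1$) is $[M_1(a)]_{i,k+j}=\big(\binom{k+j}{i}\bmod 2\big)\,a^{s_2(k+j)-s_2(i)}$. The key observation is that the exponent factors, $a^{s_2(k+j)-s_2(i)}=a^{-s_2(i)}\cdot a^{s_2(k+j)}$, into a part depending only on the row index $i$ and a part depending only on the column index $j$ (through the shifted column $k+j$).

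First I would pull the factor $a^{-s_2(i)}$ out of the $i$-th row and the factor $a^{s_2(k+j)}$ out of the $j$-th column, for each $i$ and each $j$. By multilinearity this gives
\begin{align*}
\det\big(M_1^{(n,k)}(a)\big)=\Big(\prod_{i=0}^{n-1}a^{-s_2(i)}\Big)\Big(\prod_{j=0}^{n-1}a^{s_2(k+j)}\Big)\det(N),
\end{align*}
where $N=\big(\binom{k+j}{i}\bmod 2\big)_{0\le i,j\le n-1}$ is exactly the upper $n\times n$ submatrix of $M_1=M_1(1)$ starting at column $k$, that is, $N=M_1^{(n,k)}$.

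Next I would collect the powers of $a$. Renaming the dummy index in the second product, $\prod_{i=0}^{n-1}a^{-s_2(i)}\cdot\prod_{i=0}^{n-1}a^{s_2(k+i)}=\prod_{i=0}^{n-1}a^{s_2(i+k)-s_2(i)}$, so that $\det\big(M_1^{(n,k)}(a)\big)=\big(\prod_{i=0}^{n-1}a^{s_2(i+k)-s_2(i)}\big)\det\big(M_1^{(n,k)}\big)$. Invoking Item (II), which states $\det(M_1^{(n,k)})=\pm1$ for all $n\in\NN$ and $k\in\NN_0$, then yields the claimed formula at once.

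I do not expect a genuine obstacle here; the argument is essentially bookkeeping of scalar factors. The only points needing a word of care are, first, that the extracted scalars $a^{-s_2(i)}$ need not be integers, so the factoring is best carried out as an identity in the rational function field $\QQ(a)$ (or over $\QQ$ for each fixed nonzero integer $a$) and then read back in $\ZZ$---legitimate since the left-hand side is manifestly an integer; and second, that for $k>0$ the submatrix $M_1^{(n,k)}$ is \emph{not} a principal submatrix, but row- and column-scaling is insensitive to this, so Item (II) applies verbatim. The sign $\pm1$ is inherited unchanged from Item (II) and is not pinned down further.
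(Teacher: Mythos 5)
Your proposal is correct and is essentially the paper's own argument: the paper writes $M_1(a)=\diag\big((a^{-s_2(i)})_{i\geq 0}\big)\,M_1(1)\,\diag\big((a^{s_2(i)})_{i\geq 0}\big)$, restricts this factorisation to the $n\times n$ submatrix starting at column $k$, and invokes \cite[Theorem~1]{mereb}, which is exactly your row- and column-scaling via multilinearity followed by Item (II). The only cosmetic difference is that you phrase the scaling entrywise (with the sensible remark about working over $\QQ$) while the paper phrases it as conjugation-like multiplication by diagonal matrices.
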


\begin{proof}
We observe first $M_1(a)=\diag((a^{-s_2(i)})_{i\geq 0})M_1(1) \diag((a^{s_2(i)})_{i\geq 0})$. Thus 
$$M_1(a)^{(n,k)}=\diag((a^{-s_2(i)})_{0\leq i<n}) M_1(1)^{(n,k)}\diag((a^{s_2(i)})_{k\leq i<n+k}) .$$
Using \cite[Theorem~1]{mereb} we know $\det(M_1(1)^{(n,k)})=\pm 1$ and the result follows. 
\end{proof}

Finally, Corollary \ref{cor:4} and Remark \ref{rem:1} partially give an analogue of Item 4 of Theorem A, as they ask whether matrices in $\{M_1(a):a\in\NN_0\}$ might be used to construct low-discrepancy $(t,s)$-sequences in the sense of Niederreiter or not. Note that the matrices in $\{P_1(a):a\in\NN_0\}$ are used to construct the low-discrepancy Faure sequences (see Item 4 in Theorem A.)

\begin{coro}\label{cor:4} Let $p\in\PP$ and $a,b\in\ZZ$ such that $a\not\equiv b\pmod{p}$. Then $M_1(a),M_1(b)$ modulo $p$ are qualified to construct a $(0,2)$-sequence over $\FF_p$ in the sense of Niederreiter.\end{coro}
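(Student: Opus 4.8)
The plan is to verify directly the rank criterion for $(t,s)$-sequences recalled in the Remark, specialised to $t=0$ and $s=2$. Fix $m\in\NN$ and $d_1,d_2\in\NN_0$ with $d_1+d_2=m$; I must show that the $m\times m$ matrix $N$ obtained by stacking $M_1(a)^{(d_1,m,0)}$ on top of $M_1(b)^{(d_2,m,0)}$ has full row rank over $\FF_p$, which for a square matrix is the same as being nonsingular. Since a square matrix is nonsingular exactly when its kernel is trivial, I will instead show that the only $v\in\FF_p^m$, extended by zeros to an element of $\FF_p^{\NN_0}$, with $(M_1(a)v)_i=0$ for $0\le i<d_1$ and $(M_1(b)v)_i=0$ for $0\le i<d_2$ is $v=0$.

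The key device is Theorem~\ref{thm:1}, which over $\FF_p$ gives $M_1(a)M_1(-a)=M_1(0)=I$, so that $M_1(a)$ is invertible with inverse $M_1(-a)$; note that each $M_1(\pm a)$ is upper triangular with all diagonal entries equal to $1$, hence unipotent and invertible regardless of whether $a\equiv0\pmod p$. I substitute $w:=M_1(a)v$. The first block of equations then becomes $w_i=0$ for $0\le i<d_1$; moreover, since $M_1(a)$ is upper triangular and $v$ is supported on $\{0,\dots,m-1\}$, the vector $w$ is supported on $\{0,\dots,m-1\}$ as well, hence on $\{d_1,\dots,m-1\}$. Conversely every such $w$ arises from a unique admissible $v=M_1(-a)w$, so the map $v\leftrightarrow w$ is a bijection between the admissible $v$'s and the vectors supported on $\{d_1,\dots,m-1\}$. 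It therefore suffices to rule out a nonzero $w$ supported on $\{d_1,\dots,m-1\}$ satisfying the second block of equations. Applying Theorem~\ref{thm:1} once more, $M_1(b)v=M_1(b)M_1(-a)w=M_1(b-a)w$, so the remaining conditions read $(M_1(b-a)w)_i=0$ for $0\le i<d_2$.

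Writing $c:=b-a$, which is nonzero modulo $p$ by hypothesis, these $d_2$ equations in the $d_2$ unknowns $w_{d_1},\dots,w_{d_1+d_2-1}$ have coefficient matrix precisely the $d_2\times d_2$ submatrix $M_1(c)^{(d_2,d_1)}$. By Corollary~\ref{cor:3} its determinant equals $\pm\prod_{i=0}^{d_2-1}c^{s_2(i+d_1)-s_2(i)}$, which is a nonzero element of $\FF_p$ because $c\not\equiv0\pmod p$. Hence $w=0$, so $v=0$ and $N$ is nonsingular; as $m$, $d_1$, $d_2$ were arbitrary, the criterion of the Remark is satisfied.

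The conceptual heart of the argument is the twofold use of Theorem~\ref{thm:1}: first to invert $M_1(a)$ cleanly, and then to collapse the two-block system into the single matrix $M_1(b-a)$ whose determinant is already controlled by Corollary~\ref{cor:3}. The only real subtlety, and the point I would check most carefully, is the bookkeeping for the supports: one must confirm that the change of variables genuinely matches the admissible $v$'s with all vectors supported on $\{d_1,\dots,m-1\}$, and that the degenerate cases $d_1=0$ and $d_2=0$ are covered — in the former $w$ ranges over all of $\FF_p^m$ and the relevant determinant is $\pm1$, while in the latter the support of $w$ is empty so $w=0$ is forced and the empty determinant equals $1$. Both follow from the unipotent upper-triangular shape of $M_1(\pm a)$.
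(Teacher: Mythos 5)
Your proof is correct and takes essentially the same route as the paper: both arguments use the group law of Theorem~\ref{thm:1} to reduce the pair $M_1(a),M_1(b)$ to the pair $M_1(0),M_1(b-a)$, and then apply Corollary~\ref{cor:3} to the $d_2\times d_2$ submatrix of $M_1(b-a)$ starting at column $d_1$. The only difference is presentational: you implement the reduction as a change of variables $w=M_1(a)v$ in the kernel, while the paper implements the very same factorization as right-multiplication of the stacked row system by the invertible upper triangular matrix $M_1(a)$.
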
 



\begin{proof} Let $c:=b-a\not\equiv 0\pmod{p}$, $m\in\NN$, and $d_1,d_2\geq 0$ such that $d_1+d_2=m$. 
Corollary~\ref{cor:3} ensures that every upper $d_2\times d_2$ submatrix of $M_1(c)$ has determinant $\not\equiv 0 \pmod{p}$. This together with the fact that $M_1(0)=(\delta_{i,j})_{i,j\geq 0}$ immediately yields the linear independence of the system of vectors $([M_1(0)]_{i,0},\ldots,([M_1(0)]_{i,m-1})$, $0\leq i<d_1$ and $([M_1(c)]_{i,0},\ldots,([M_1(c)]_{i,m-1})$, $0\leq i<d_2$. Hence $M_1(0),M_1(c)$ are qualified to construct a $(0,2)$ sequence in base $p$ in the sense of Niederreiter. 
From the fact that $M_1(a)$ is a non-singular upper triangular matrix with determinant $1$ we know then that the system of vectors $([M_1(0)M_1(a)]_{i,0},\ldots,([M_1(0)M_1(a)]_{i,m-1})$, $0\leq i<d_1$ and $([M_1(c)M_1(a)]_{i,0},\ldots,([M_1(c)M_1(a)]_{i,m-1})$, $0\leq i<d_2$ are linear independent modulo $p$. From Theorem~\ref{thm:1} we obtain that $M_1(0)M_1(a)=M_1(a)$, that $M_1(c)M_1(a)=M_1(b)$. Thus $M_1(a),M_1(b)$ are qualified to construct a $(0,2)$-sequence in base $p$ in the sense of Niederreiter. 
\end{proof}

\begin{remark}\label{rem:1}{\rm Theorem A, Item 4 allows to construct three-dimensional low-discrepancy $(0,3)$-sequences over $\FF_p$ with any odd prime $p$ by using, e.g., $P_1(0),P_1(1),P_1(2)$ taken modulo $p\geq 3$. It is not possible to further generalize the statement in Corollary \ref{cor:4} to dimensions $>2$ in the sense of Item 4 in Theorem A. Choose e.g. $C_1=M(0),\,C_2=M(1),\,C_3=M(2)$, $m=3$, and $d_1=d_2=d_3=1$. Then the three vectors $(1,0,0)$, $(1,1,1)$, and $(1,2,2)$ are linearly dependent, which contradicts the condition for a $(0,3)$-sequence in base $p\geq 3$. \\
An interesting question, however, is: does there exist a non-singular upper triangular matrix $C\in\ZZ^{\NN_0\times\NN_0}$ such that $M(0),M(1),C$ are qualified to construct a $(0,3)$-sequence (or at least a $(t,3)$-sequence in base $3$ with small $t\in\NN_0)$ and if so finding an explicit formula for the entries of $C$ might be of interest. Note that the condition on $C_1,C_2,C_3$ that are qualified to construct a $(0,3)$-sequence over $\FF_3$ is quite strict. (See e.g. \cite{hofkos} and \cite{hoflarAA} for discussions on $C_1,C_2$ over $\FF_2$ to be qualified to construct a $(0,2)$-sequence.)}
\end{remark}

\section{The nice coincidences for the Hankel matrix determined by the Catalan numbers}\label{sec:3}

A Hankel matrix $H=(h_{i,j})_{i,j\geq 0}$ can be uniquely described by a sequence $(c_{k})_{k\geq 0}$ as $h_{i,j}=c_{i+j}$ for all $i,j\geq 0$. In this section we focus on Hankel matrices $H_1,H_2$ with $(c_k)_{k\geq 0}$ stemming from Catalan numbers interspersed with zeros, i.e., 
$$c_{2k}=(-1)^kC_k \mbox{ and } c_{2k+1}=0$$
for all $k\geq 0$ with $C_k:=\binom{2k}{k}-\binom{2k}{k-1}=\frac{1}{k+1}\binom{2k}{k}$ once taken pure and once taken modulo ${2}$ but both considered as integers. 

Hence 
$$H_1=(h^{(1)}_{i,j})_{i,j\geq 0}\mbox{ and }H_2=(h^{(2)}_{i,j})_{i,j\geq 0}$$ with $ h^{(1)}_{i,j}=c_{i+j}$ and $h^{(2)}_{i,j}=(c_{i+j}\pmod{2})$. 
Note that as an easy consequence of Lucas' Theorem modulo $2$ (see equation \eqref{eq:LT}), $H_2=(h^{(2)}_{i,j})_{i,j\geq 0}$ with $h^{(2)}_{i,j}=1$ if  $i+j=2^k-2$ for some some $k\in\NN$ and $h^{(2)}_{i,j}=0$ otherwise. \\

We start with a common property of the principal minors of both Hankel matrices $H_1$ and $H_2$.  The $n$th principal minor of a matrix $C$ is the determinant of the upper left $n\times n$ submatrix, i.e. $\det(C^{(n)})$.

\begin{theorem}\label{thm:PM_Hankel}
We have $\det(H_1^{(n)})=\pm 1$ as well as $\det(H_2^{(n)})=\pm 1$ for all $n\in\NN$. 
\end{theorem}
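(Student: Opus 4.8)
The plan is to treat the two matrices by entirely different devices, since their entries are of different natures: $H_1$ I would reduce to the classical Hankel determinants of the Catalan numbers, while $H_2$ I would attack through a self-similar recursion read off from the explicit description of its support.

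For $H_1$ I would first exploit the checkerboard pattern caused by $c_{2k+1}=0$. Conjugating $H_1^{(n)}$ by the permutation matrix that sorts the indices by parity (listing the even indices before the odd ones) leaves the determinant unchanged and block-diagonalises the matrix as $\mathrm{diag}(A,B)$, where $A_{p,q}=c_{2(p+q)}=(-1)^{p+q}C_{p+q}$ has size $\lceil n/2\rceil$ and $B_{p,q}=c_{2(p+q+1)}=(-1)^{p+q+1}C_{p+q+1}$ has size $\lfloor n/2\rfloor$. The alternating signs factor through diagonal $\pm1$ matrices, i.e. $A=D\,(C_{p+q})_{p,q}\,D$ and $B=-D\,(C_{p+q+1})_{p,q}\,D$ with $D=\mathrm{diag}((-1)^p)$, so that $\det A=\det\big((C_{p+q})\big)$ and $\det B=(-1)^{\lfloor n/2\rfloor}\det\big((C_{p+q+1})\big)$. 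Invoking the classical facts that the Hankel determinants $\det\big((C_{i+j})_{0\le i,j<m}\big)$ and $\det\big((C_{i+j+1})_{0\le i,j<m}\big)$ both equal $1$, I would conclude $\det(H_1^{(n)})=(-1)^{\lfloor n/2\rfloor}=\pm1$. (Alternatively one may note that $\sum_{k\ge0}c_kx^k$ is the Catalan generating function evaluated at $-x^2$, read off the constant Jacobi-continued-fraction coefficients $\lambda_k=-1$, and apply $\det(H_1^{(n)})=\prod_{k=1}^{n-1}\lambda_k^{\,n-k}=(-1)^{\binom{n}{2}}$.)

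For $H_2$ I would work directly from the stated description: $h^{(2)}_{i,j}=1$ exactly when $i+j=2^m-2$ for some $m\in\NN$. Fix $n$ and let $M$ be determined by $2^{M-1}\le n<2^M$. Then $2^M-2$ is the largest admissible anti-diagonal meeting $H_2^{(n)}$, every entry with $i+j>2^M-2$ vanishes, and the whole anti-diagonal $i+j=2^M-2$ consists of ones. Splitting the index set into $L=\{0,\dots,2^{M-1}-1\}$ and $R=\{2^{M-1},\dots,n-1\}$ gives the block form
\[
H_2^{(n)}=\begin{pmatrix}H_2^{(2^{M-1})} & B\\ B^{T} & 0\end{pmatrix},
\]
because on $R\times R$ one has $i+j\ge 2^M$, forcing the zero block, while $B$ (the $L\times R$ part) carries ones precisely where $i+j=2^M-2$; after re-indexing the columns of $R$, $B$ has a single $1$ in each column, lying along the anti-diagonal $i+j'=2^{M-1}-2$ and hence in distinct rows. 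The key step is then a sequence of Laplace expansions: each of the last $n-2^{M-1}$ columns has a unique $1$ (inside $B$) and each corresponding row of $B^{T}$ has a unique $1$, so expanding along them peels off $B$ and $B^{T}$ at the cost of a sign, after which a final expansion along the forced diagonal entry in row $2^{M-1}-1$ leaves a \emph{contiguous} principal submatrix $H_2^{(2^M-1-n)}$. This yields the recursion
\[
\det\big(H_2^{(n)}\big)=\pm\det\big(H_2^{(2^M-1-n)}\big),\qquad 2^{M-1}\le n<2^M .
\]
Since $0\le 2^M-1-n<n$, induction on $n$ (base cases $n\in\{0,1\}$) immediately gives $\det(H_2^{(n)})=\pm1$.

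The main obstacle is the $H_2$ recursion. One must verify carefully that the bottom-right block is genuinely zero and that the outer anti-diagonal is entirely ones (both following from $2^{M-1}\le n<2^M$), and then track the signs and, above all, confirm that the submatrix surviving the Laplace expansions is again the \emph{contiguous} block $H_2^{(2^M-1-n)}$ rather than a submatrix on an index set with gaps. Establishing that the last row and column each contain exactly one $1$, and that deleting the two matched indices together with the forced diagonal index $2^{M-1}-1$ restores contiguity, is the delicate bookkeeping at the heart of the argument. For $H_1$, by contrast, the only non-routine inputs are the generating-function identity and the two classical Catalan Hankel evaluations, both of which are standard.
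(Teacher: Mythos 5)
Your proposal is correct, but only the $H_2$ half coincides with the paper's own argument. For $H_2$ you have essentially reproduced the paper's proof: the paper also isolates the outermost anti-diagonal $i+j=2^M-2$ of ones, observes that all entries beyond it vanish, and uses row/column manipulations (your Laplace expansions, phrased as splitting off an anti-diagonal identity block $J_{2l-1}$) to reduce to the \emph{contiguous} principal block, yielding exactly your recursion $\det(H_2^{(n)})=\pm\det(H_2^{(2^M-1-n)})$, with induction on the dyadic level rather than on $n$; the delicate contiguity bookkeeping you flag is checked the same way there (and your verification of it is sound). For $H_1$, however, you take a genuinely different route: the paper computes nothing, but instead cites Lemma~1 and Proposition~1 of \cite{hofJNT}, which show---via the continued fraction expansion of the Laurent series $\sum_{k\geq 0}c_kX^{-k-1}$, all of whose partial quotients have degree $1$ over every $\FF_p$---that $\det(H_1^{(n)})\not\equiv 0 \pmod{p}$ for every prime $p$, whence the integer determinant can only be $\pm 1$. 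Your parity block-diagonalization into two pure Catalan Hankel matrices, combined with the classical evaluations $\det\big((C_{i+j})_{0\le i,j<m}\big)=\det\big((C_{i+j+1})_{0\le i,j<m}\big)=1$, is elementary and self-contained (modulo those classical facts), and it buys strictly more than the paper's argument: the exact value $\det(H_1^{(n)})=(-1)^{\lfloor n/2\rfloor}$ rather than just $\pm 1$. What the paper's route buys instead is the connection to the continued-fraction theme developed in the rest of the note (Remark~\ref{rem:3} and the Proposition), where the same input from \cite{hofJNT} is reused.
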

\begin{proof}
The first is a consequence of \cite[Lemma~1]{hofJNT} together with \cite[Proposition~1]{hofJNT}, which ensures $\det(H_1^{(n)})\pmod{p}\not\equiv 0$ for every $n\in\NN$ and every $p\in\PP$. 

The second is an immediate consequence of the ArXiv paper \cite[Theorem~10.1 (i)]{bacher}, which gives the $LDU$ decomposition of $H_2$. (Details on the $LDU$ decomposition can be found in the subsequent Remark~\ref{rem:3}). As \cite{bacher} is an ArXiv paper we give a proof of the second statement of this Theorem~\ref{thm:PM_Hankel}. We observe that $H_2^{(2^k-1)}$ is an anti-diagonal upper triangular matrix with anti-diagonal-entries all equal to $1$ for all $k\in\NN$. Hence, obviously $\det(H_2^{(2^k-1)})=\pm 1$ for all $k$. For $\det(H_2^{(n)})=\pm 1$ for all $2^k-1<n<2^{k+1}-1$ we proceed by induction on $k$. Let $k$ in $\NN$ and assume that $n=2^k-1+l$ with $1\leq l<2^k$. The induction hypothesis ensures $\det(H_2^{(m)})=\pm 1$ for all $m\leq 2^k-1$. We do row as well as column manipulation in $H_2^{(2^k-1+l)}$ without changing the determinant. In more details we exploit the anti-diagonal part denoted by $J_{2l+1}$ in the right bottom of this matrix. We obtain then a matrix $Q$ of the form $Q=\begin{pmatrix}H_2^{(2^k-1-l+1)}&\boldsymbol{0}\\\boldsymbol{0}&J_{2l-1}\end{pmatrix}$ where $\boldsymbol{0}$ stands for a submatrix with all entries equal to $0$. Now obviously, $\det(Q)=\det(H_2^{(2^k-1-l+1)})\cdot \det(J_{2l-1})=\pm 1$, where for the last equality we used the induction hypothesis for $n=2^k-l$. A sketch of the matrices $H_2^{(2^k-1+l)}$ and $Q$, which contains $H_2^{(2^k-1-l+1)}$ and $J_{2l-1}$, can be found in Figure \ref{fig:1}. 

\begin{figure}[h]
	\centering
		\includegraphics[width=0.25\textwidth]{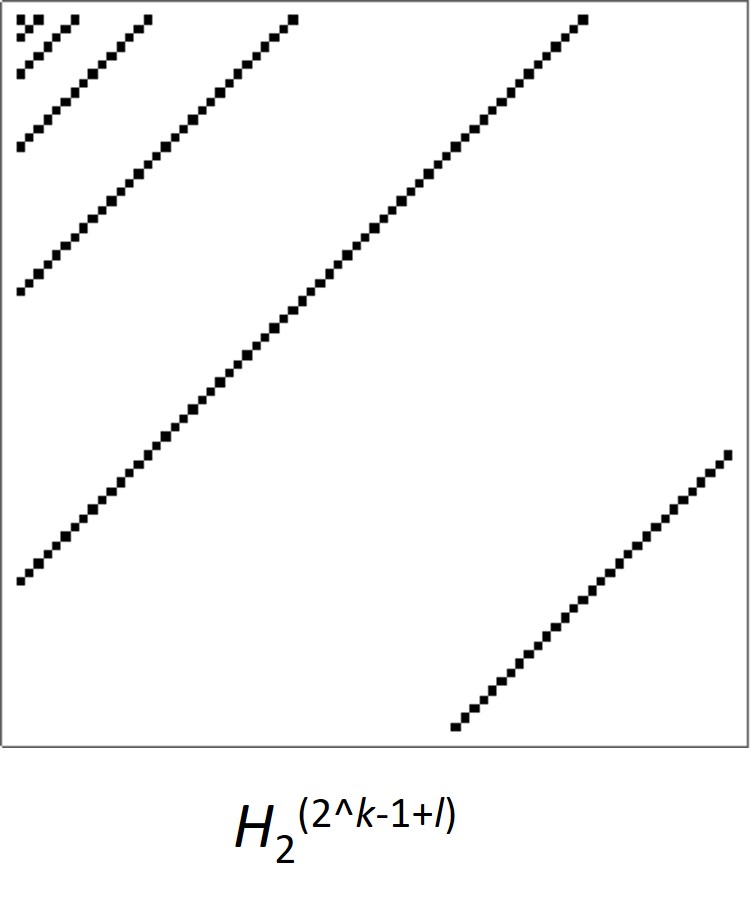}\hspace{1cm}\includegraphics[width=0.50\textwidth]{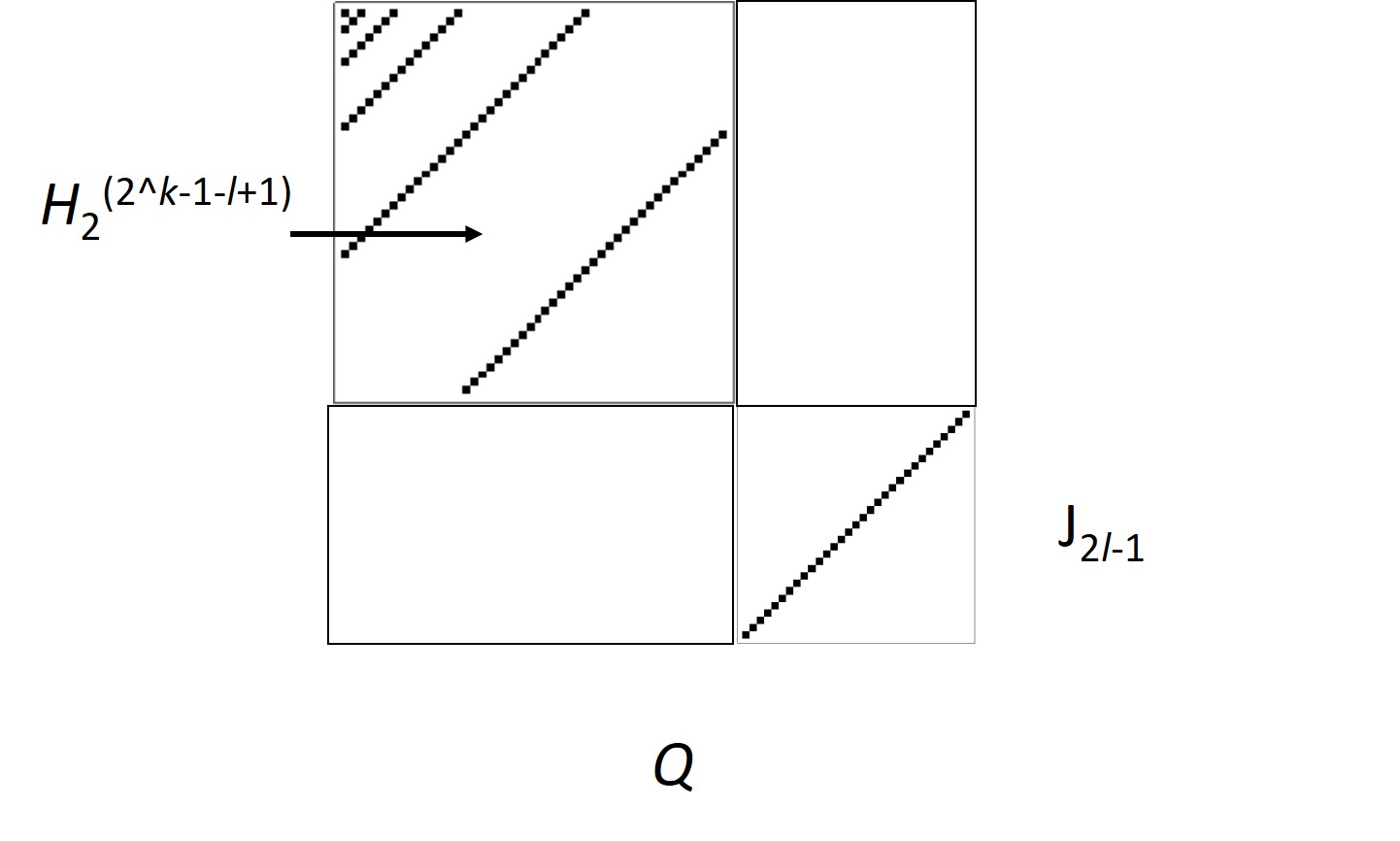}
\caption{Sketch of $H_2^{(2^k-1+l)}$ and $Q$.}\label{fig:1}
\end{figure}

\end{proof}





\begin{remark}\label{rem:3}{\rm 
One consequence of Theorem \ref{thm:PM_Hankel} is that both matrices $H_1$ and $H_2$ have a unique $LDU$ decomposition where $L$ is a lower triangular matrix with diagonal-entries all $1$, $U$ is an upper triangular matrix with diagonal-entries all $1$ and $D$ is a diagonal-matrix with entries $\pm 1$. \cite[Theorem~10.1 (i)]{bacher} gives the $LDU$ decomposition of $H_2$, which was already used as an alternative argument in the proof of Theorem \ref{thm:PM_Hankel}. 
Such $LDU$ decomposition with additionally $U=L^T$ were already investigated e.g. in \cite{PeartWoan}, where $L$ is constructed row wise using a tritriangular so-called Stieltjes matrix. See also \cite{bacher} and \cite{Poo}. In \cite{bacher} the LDU decomposition of some further Hankel matrices are given. One example is the Hankel matrix $H=(h_{i,j})_{i,j\geq 0}$ over $\ZZ$ with $h_{i,j}=C_{i+j}\pmod{2}$ (see \cite[Theorem~10.1]{bacher}). 

A slightly different version of this $LDU$ decomposition is an $LU$ decomposition with $L$ and $U$ be non-singular lower or upper resp. triangular matrices. Theorem~1 and Proposition~1 in \cite{hofJNT} investigate such an $LU$ decomposition of $H_1$ together with the continued fraction expansion of the related formal Laurent series $\sum_{k\geq 0}c_kX^{-k-1}$. The relevant tool for the investigation is a construction of $L$ and $U$ with tritriangular matrices comparable with the Stieltjes matrix in \cite{PeartWoan} but defined via the continued fraction expansions with coefficients having all degrees $1$ of the formal Laurent series. Using \cite[Lemma~1]{hofJNT} together with Theorem~\ref{thm:PM_Hankel} we see that both Laurent series $\sum_{k\geq 0}c_kX^{-k-1}$ as well as $\sum_{k\geq 0}(c_k\pmod 2)X^{-k-1}$ have continued fraction expansion with $[0;A_1(X),A_2(X),A_3(X),\ldots]$ with $\deg(A_i(X))=1$ for all $i\geq 1$. The following proposition gives explicit formulas for those $A_i(X)$. Although these results are not new, we would like to summarise them here in a proposition for the interested reader.}
\end{remark}

\begin{proposition}
Let $\mathcal{L}_1=\sum_{k\geq 0}c_kX^{-k-1}$ and $\mathcal{L}_2=\sum_{k\geq 0}(c_k\pmod 2)X^{-k-1}$ be formal Laurent series over $\QQ$, with $$c_{2k}=(-1)^kC_k \mbox{ and } c_{2k+1}=0.$$
The continued fraction expansions of $\mathcal{L}_1$ and $\mathcal{L}_2$ satisfy $$\mathcal{L}_1=[0;\overline{X}]\quad \mbox{ and } \quad \mathcal{L}_1=[0;{s_1 X},s_2X,s_3X,\ldots]$$
with $(s_i)_{i\geq 1}$ in $\{1,-1\}$ is a paperfolding sequence often denoted by $\mathcal{F}(1,-1,-1,-1,\ldots)$ (cf. e.g. \cite{Alletal} for this notation), that defines $(s_i)_{i\geq 1}=\lim_{i\to\infty}w_i$ via the recursion $w_1=s_1=(1)$, $w_{i+1}=w_i\cdot(-1)\cdot(-{w_i}^R)$ for $i\geq 1$. Here $\cdot$ denotes the concatenation of the finite sequences, $w^R$ denotes the finite sequence $w$ in reflected order, and the $-$ in front of a finite sequence changes the signs of the elements of the sequence. 
\end{proposition}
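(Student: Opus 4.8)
The plan is to treat the two claims separately, settling the equality $\mathcal{L}_1=[0;\overline{X}]$ by an algebraic (generating-function) argument and the paperfolding expansion of $\mathcal{L}_2$ (the second displayed formula, where the subscript should read $\mathcal{L}_2$) by the continued-fraction folding lemma.

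For the first claim I would work with the Catalan generating function $C(x)=\sum_{k\ge0}C_kx^k$. Since $c_{2k+1}=0$ and $c_{2k}=(-1)^kC_k$, we have $\mathcal{L}_1=\sum_{k\ge0}(-1)^kC_kX^{-2k-1}=X^{-1}C(-X^{-2})$. Setting $u:=C(-X^{-2})=X\mathcal{L}_1$ in the defining identity $C(x)=1+xC(x)^2$ at $x=-X^{-2}$ gives $u=1-X^{-2}u^2$, which rearranges to
$$\mathcal{L}_1^2+X\mathcal{L}_1-1=0.$$
On the other hand $\theta:=[0;\overline{X}]$ satisfies $\theta=1/(X+\theta)$, i.e. the very same quadratic $\theta^2+X\theta-1=0$. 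Both $\mathcal{L}_1$ and $\theta$ are the root of this quadratic whose Laurent expansion begins $X^{-1}+\cdots$, so they coincide and $\mathcal{L}_1=[0;\overline{X}]$.

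For $\mathcal{L}_2$ I would first make the series explicit. By the description of $H_2$ recorded above, $c_k\pmod2=1$ precisely when $k=2^j-2$ for some $j\ge1$, and is $0$ otherwise; hence $\mathcal{L}_2=\sum_{j\ge1}X^{-(2^j-2)-1}=\sum_{j\ge1}X^{1-2^j}$, a lacunary series. The central tool is the folding lemma: if $p_n/q_n=[a_0;a_1,\dots,a_n]$ then
$$\frac{p_n}{q_n}+\frac{(-1)^n}{q_n^2\,Y}=[a_0;a_1,\dots,a_n,\,Y,\,-a_n,\dots,-a_1].$$
I would run an induction on the partial sums $f_N:=\sum_{j=1}^{N}X^{1-2^j}$, with base case $f_1=X^{-1}=[0;X]$ corresponding to $s_1=1$. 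For the step, write $f_N=p_N/q_N$ with $q_N=X^{2^N-1}$, so $q_N^2=X^{2^{N+1}-2}$, and observe that the length of the partial-quotient word is $n=2^N-1$, which is odd. Matching the increment $f_{N+1}-f_N=X^{1-2^{N+1}}$ with $(-1)^n/(q_N^2Y)$ forces $Y=(-1)^nX=-X$, so the folding lemma appends to $[0;A_1,\dots,A_n]$ the single quotient $-X$ followed by $-A_n,\dots,-A_1$. Writing $A_i=s_iX$, the appended block has sign pattern $-1,\,-s_n,\dots,-s_1$; since $|w_N|=2^N-1$ and the word length obeys $|w_{N+1}|=2|w_N|+1$, this is exactly the recursion $w_{N+1}=w_N\cdot(-1)\cdot(-w_N^R)$. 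Passing to the limit (the convergents tend to $\mathcal{L}_2$ in the degree valuation, as $f_N\to\mathcal{L}_2$) yields $\mathcal{L}_2=[0;s_1X,s_2X,\dots]$ with $(s_i)=\mathcal{F}(1,-1,-1,\dots)$.

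The main obstacle I anticipate is the careful bookkeeping around the folding lemma in the function-field setting: I must verify that every appended partial quotient $\pm X$ keeps degree $1$, so that no degree-$0$ term surfaces at the junction of the old word and the folded tail (which would otherwise demand a collapse $[\dots,a,0,b,\dots]=[\dots,a+b,\dots]$), and that the sign of $Y$ is invariably $-1$, which rests entirely on $2^N-1$ being odd. Computing the first two steps explicitly, $f_2=[0;X,-X,-X]$ and $f_3=[0;X,-X,-X,-X,X,X,-X]$, confirms both the absence of any collapse and the exact agreement with $w_2=(1,-1,-1)$ and $w_3=(1,-1,-1,-1,1,1,-1)$; I would use these as the anchoring check for the induction.
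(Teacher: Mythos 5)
Your proof is correct, but it takes a genuinely different route from the paper's, because the paper's ``proof'' of this proposition is essentially a pair of citations: the identity $\mathcal{L}_1=[0;\overline{X}]$ is attributed to \cite{hofJNT} (where that continued fraction is the starting point from which the Catalan Laurent series is derived), and the paperfolding expansion of $\mathcal{L}_2$ is attributed to \cite[Equation~(9)]{Alletal} and \cite[Theorem~1]{PoSh}. You instead supply self-contained arguments. For $\mathcal{L}_1$, observing that $\mathcal{L}_1=X^{-1}C(-X^{-2})$ and $\theta=[0;\overline{X}]$ both satisfy $y^2+Xy-1=0$ and both have degree $-1$, hence coincide, is clean and complete. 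For $\mathcal{L}_2$ you reconstruct, via the folding lemma and induction on the partial sums $f_N=\sum_{j=1}^N X^{1-2^j}$, essentially the argument that underlies the cited results, with the key points ($Y=-X$ forced because the word length $2^N-1$ is odd; the appended block giving exactly $w_{N+1}=w_N\cdot(-1)\cdot(-w_N^R)$) correctly identified, and you rightly note that the second display in the statement should read $\mathcal{L}_2$, not $\mathcal{L}_1$. One small point you gloss over: the folding lemma is stated for the convergent denominator $q_n$ coming from the continued-fraction recursion, whereas you take $q_N=X^{2^N-1}$ from the lowest-terms representation of $f_N$; these agree only up to a nonzero constant. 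Since the partial quotients are $s_iX$ with $s_i=\pm1$, the recursion $q_k=s_kXq_{k-1}+q_{k-2}$ shows the leading coefficient of $q_n$ is $\prod_{i\le n}s_i=\pm1$, so $q_n^2=X^{2^{N+1}-2}$ exactly and your increment computation survives; a sentence to this effect would close the only gap. What your route buys is a proof that makes the note independent of \cite{hofJNT}, \cite{Alletal}, and \cite{PoSh}; what the paper's route buys is brevity and explicit pointers to where these identities already live in the literature.
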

\begin{proof}
The continued fraction $\mathcal{L}_1=[0;\overline{X}]$ is the starting point in \cite{hofJNT} to obtain the formal Laurent series $\mathcal{L}_1=\sum_{k\geq 0}c_kX^{-k-1}$ determined via the Catalan numbers. Hence the first equality $\mathcal{L}_1=[0;\overline{X}]$ can be found in \cite{hofJNT}. 

The equality $[0,{s_1 X},s_2X,s_3X,\ldots]=\sum_{k\geq 0}(c_k\pmod 2)X^{-k-1}$ can be found in e.g. \cite[Equation~(9)]{Alletal}, previously in e.g. \cite[Theorem~1]{PoSh}. 
\end{proof}

\section{Closing discussions}\label{sec:4}

The content of this note touches on results from several branches of mathematics, as, e.g.:
\begin{itemize}

	\item[-] Problems for a formal Laurent series $\sum_{k\geq j}c_kX^{-k}$ and its simple continued fraction expansions. See e.g. \cite{Alletal} for so-called \emph{folded continued fractions}, that are e.g. continued fractions determined by paperfolding sequences, and their formal Laurant series expansions. (See also e.g.  \cite{bacher}, \cite{hofJNT}, \cite{Poo}, \cite{PoSh}.) For work and discussions on analogs of the Littlewood conjecture in the set of Laurent series see e.g. \cite{ANL}.   
	\item[-] Various aspects on Hankel matrices. As specific $LDU$ or $LU$ decompositions and determinants of submatrices (see e.g. \cite{bacher}, \cite{hofJNT}, \cite{PeartWoan}). 
	\item[-] Properties of the Pascal matrices as e.g. determinants of submatrices (see e.g. \cite{BaCh}, \cite{Lunnon}, \cite{mereb}), their usage for constructing low-discrepancy sequences and normal numbers (see e.g. \cite{faure} and \cite{levin99}).
	\item[-] Finding matrices that are qualified to construct low-discrepancy $(t,s)$-sequences over $\FF_p$ (see e.g. the monograph \cite{DP} for an overview.)	In particular the existence of Hankel matrices over $\FF_p$ that are qualified to construct $(t,s)$-sequences for $s>1$ is an open problem. Existence would contradict the analog of the Littlewood conjecture in the set of formal Laurent series over $\FF_p$. Such $(t,s)$-sequences, which are constructed via Hankel matrices, are well-established as \emph{Kronecker-type sequences}. Kronecker-type sequences are frequently studied (see e.g. \cite{hofFFA} and the references therein). 
	\item[-] Finding normal numbers with low-discrepancy (see \cite{levin99} and also e.g. \cite{BechCart}, \cite{hoflarNN1}, \cite{hoflarNN2}). As $M_2^{(2^n)}$ is a simple column reflection of $M_1^{(2^n)}$ and it might be used for constructing low-discrepancy normal numbers. Also the simple column reflected $M^{(2^n)}_1(a)$ might be a candidate for constructing normal numbers. An open problem for these normal numbers is to identify the exact order of its discrepancies (cf. e.g. \cite{hoflarNN1} and \cite{hoflarNN2}).
	\item[-] Problems concerning apwenian sequences (see e.g. \cite{GHW} and the references therein.). A $0$-$1$ apwenian sequence $(c_j)_{j\geq 0}$ can be defined via determinants of the Hankel matrix $H=(h_{i,j})_{i,j\geq 0}$ with $h_{i,j}=c_{i+j}$, namely $(c_j)_{j\geq 0}$ over $\{0,1\}$ is apwenian if and only if $\det(H^{(n)})\equiv 1\pmod 2$ for every $n\in\NN$. From this definition a connection to the study of determinants of submatrices of Hankel matrices is obvious. Interesting connections between apwenian sequences and the so-called perfect linear complexity profile of pseudo random $0$-$1$ sequences were identified in \cite{Alletal2}. In there a further connection to the related formal Laurent series over $\FF_2$ and its continued fraction expansion is pointed out. 
\end{itemize}

We would like to close this note with the following comment stemming from the interdisciplinary paper \cite{Alletal2} by Allouche, Han, and Niederreiter who motivated the work therein via the sentences ``\textit{We hope that this will help gathering two distinct communities of researchers}'' and ``\textit{One of the intense pleasures in mathematical research is to discover a link between two fields that either did not seem immediately related or were studied from distinct point of views}''. The same two sentences but with ``two'' replaced by ``several'' might be used to summarize the motivation of this note.

\end{document}